\documentclass[12pt,a4paper]{amsart}

% If you wish to use the digit 1 from bbold, remove these comments
%\DeclareSymbolFont{bbold}{U}{bbold}{m}{n}
%\DeclareMathSymbol{\1}{\mathord}{bbold}{`1}

\usepackage{latexsym} 
 
\usepackage{graphicx}
\usepackage{epsfig}
\usepackage{mathrsfs}
\usepackage{amssymb}
\usepackage{amsthm}
\usepackage{amsfonts}
\usepackage{amsmath}
\usepackage{centernot}
\usepackage{amstext}
\usepackage{amscd}
\usepackage{enumerate}
\usepackage{tikz}
\usepackage{epic}
\usepackage{eepic}
\usepackage{pstricks,pst-plot}
\usepackage{bigdelim}
\usepackage{multirow}

\setlength{\textwidth}{\paperwidth}
\addtolength{\textwidth}{-6cm}
\setlength{\textheight}{\paperheight}
\addtolength{\textheight}{-5cm}
\addtolength{\textheight}{-\headheight}
\addtolength{\textheight}{-\headsep}
\addtolength{\textheight}{-\footskip}
\setlength{\oddsidemargin}{0.5cm}
\setlength{\evensidemargin}{0.5cm}
\setlength{\topmargin}{-0.5cm}

\numberwithin{equation}{section}

\theoremstyle{plain}%default
\newtheorem{thm}{Theorem}[section]

\newtheorem{lem}[thm]{Lemma}
\newtheorem{ass}[thm]{Assertion}

\newtheorem{theorem*}{Theorem}[]

\theoremstyle{definition}
\newtheorem{defn}[thm]{Definition}

\newtheorem{rem}{Remark}

\theoremstyle{remark}

\theoremstyle{property}

\newcommand{\N}{\mathbb{N}}
\newcommand{\R}{\mathbb{R}}

\DeclareMathOperator{\grad}{grad\,}

\def\accentclass@{7}
\def\makeacc@#1#2{\def#1{\mathaccent"\accentclass@#2 }}
\makeacc@\cir{017}

\def\dis{\displaystyle}

\def\min{\mathop{\rm min}}

%\def\s {{\sigma}}
%\def\Si {{\Sigma}}

%\def\accentclass@{7}
%\def\makeacc@#1#2{\def#1{\mathaccent"\accentclass@#2 }}
%\makeacc@\cir{017}
%\newcommand{\Eo}{{\cir E }}
%%%%%%%%%%%%%%%%%%%%%%%%%%%%%%%%%%%%%%%%%%%%%%%%%%%%%%%%%%%%%%%%%%%%
\title[Correction : The Kuo condition and an inequality of Thom's type]
{Correction to : The Kuo condition, an inequality \\ 
of Thom's type and ($C$)-regularity}

\author{Karim Bekka and Satoshi Koike}

\address{Institut de recherche Mathematique de Rennes, 
Universit\'{e} de Rennes 1, Campus Beaulieu, 35042 Rennes cedex, France}
\address{Department of Mathematics, Hyogo University of Teacher Education,
942-1 Shimokume, Kato, Hyogo 673-1494, Japan}
\email{karim.bekka@univ-rennes1.fr}
\email{koike@hyogo-u.ac.jp} 

\subjclass[2010]{Primary 57R45 Secondary 58K40}

\keywords{$(c)$-regularity, Kuo condition}

\thanks{This research is partially supported by  
JSPS KAKENHI Grant Number 20K03611. }

\date{\today}

\begin{document}

%%%%%%%%%%%%%%%%%%%%%%%%%%%%%%%%%%%%%%%%%%%%%%%%%%%%%%%%%%%%%%%%%%%%%%%%%%%%%%
%%%%%%%%%%%%%%%%%%%%%%%%%%%%%%%%%%%%%%%%%%%%%%%%%%%%%%%%%%%%%%%%%%%%%%%%%%%%%%

\maketitle

\begin{abstract} We correct a statement of a theorem on characterisation of the $(c)$-regularity we gave in Topology 37 (1998), 45--62.
This theorem was used in the paper in the proof of two theorems on the $(c)$-regular stratification.
In this note we give a weaker version of the theorem as an alternative lemma which ensures the $(c)$-regularity condition 
and turn to be sufficient for the proof of the theorems.
\end{abstract}

%%%%%%%%%%%%%%%%%%%%%%%%%%%%%%%%%%%%%%%%%%%%%%%%%%%%%%%%%%%%%%%%%%%%%%%%%%%%%%
%%%%%%%%%%%%%%%%%%%%%%%%%%%%%%%%%%%%%%%%%%%%%%%%%%%%%%%%%%%%%%%%%%%%%%%%%%%%%%
\bigskip

\section{Introduction}
We gave a characterisation of the $(c)$-regularity in Theorem 2.4 of \cite{bekkakoike}. 
In order to show the theorem, we used a statement that 
a one-dimensional subspace of $\R^{n+p}$ 
$$
K_x := (Ker \ d\rho (x) \cap T_x X)^{\bot} \cap T_x X 
$$
is orthogonal to $\R^n (:= \R^n \times \{ 0\}$) at any $x \in X$.  
But this statement does not necessarily hold, namely $K_x$ is not always orthogonal to $\R^n$ 
as T. Gaffney pointed out to us in \cite{gaffney}. 
Therefore the proof of Theorem 2.4 in \cite{bekkakoike} is false.
In addition, we proved in \cite{bekkakoike} Theorem 2.7 (resp. Theorem 2.8) that 
the stratification $\Sigma (\R^n \times J)$ (see \S 3 for the definition)  
is $(c)$-regular under the Kuo condition (resp. the second Kuo condition), using Theorem 2.4. 

In this note we show an alternative lemma to Theorem 2.4, adding one more 
assumption to the original two assumptions.
The $(c)$-regularity follows from these three assumptions. 
In addition, we show that the Kuo condition (resp. the second Kuo condition) implies not only 
the original assumptions but the new assumtion of the lemma. 
In other words, Theorem 2.7 (resp. Theorem 2.8) in \cite{bekkakoike} follows from the lemma.

The authors would like to thank T. Gaffney for pointing out to us a mistake 
in the proof of Theorem 2.4 in \cite{bekkakoike}.

%%%%%%%%%%%%%%%%%%%%%%%%%%%%%%%%%%%%%%%%%%%%%%%%%%%%%%%%%%%%%%%%%%%%%%%%%%%%%%%%
%%%%%%%%%%%%%%%%%%%%%%%%%%%%%%%%%%%%%%%%%%%%%%%%%%%%%%%%%%%%%%%%%%%%%%%%%%%%%%%%

\section{Alternative lemma}\label{lemma}

Let $M$ be a smooth manifold, and let $X, \ Y$ be smooth submanifolds of $M$ such that
$Y \subset \overline{X}$. 
We suppose now that $M$ is endowed with a Riemannian metric. 
Let $(T_Y , \pi , \rho )$ be a smooth tubular neighbourhood for $Y$ with the associated 
projection $\pi$ and a smooth non-negative control function $\rho$ such that 
$\rho^{-1}(0) = Y$ and $\grad \rho (x) \in Ker \ d\pi (x)$. 

In this section we treat a lemma on regularity conditions in the stratification theory.
Let us recall some regularity conditions.

\begin{defn}
(1) We say that the pair $(X,Y)$ is {\em Whitney $(a)$-regular at} $y_0 \in Y$, 
if for any sequence of points $\{ x_i\}$ of $X$ which tends to $y_0$ such that
the sequence of tangent spaces
$\{ T_{x_i}X\}$ tends to some plane $\sigma$ in the Grassman space of 
$\dim X$-planes, then we have $T_{y_0}Y \subset \sigma$. 
We say that $(X,Y)$ is {\em $(a)$-regular} if it is $(a)$-regular at any point 
$y_0 \in Y$.
(See \cite{whitney1, whitney2} for properties of the Whitney $(a)$-regularity.)

(2) We say that the pair $(X,Y)$ is {\em $(c)$-regular at $y_0 \in Y$
for the control function} $\rho$, 
if for any sequence of points $\{ x_i\}$ of $X$ which tends to $y_0$ such that
the sequence of planes
$\{ Ker \ d\rho (x_i) \cap T_{x_i}X\}$ tends to some plane $\tau$ in the 
Grassman space of $(\dim X - 1)$-planes, then we have $T_{y_0}Y \subset \tau$. 
We say that $(X,Y)$ is {\em $(c)$-regular for the control function} $\rho$ 
if it is $(c)$-regular at any point $y_0 \in Y$ for the function $\rho$.
(See \cite{bekka} for properties of the $(c)$-regularity.)
\end{defn} 

\begin{defn}
We say that the pair $(X,Y)$ satisifies {\em condition $(m)$}, 
if there exists some positive number $\epsilon > 0$ such that 
$(\pi ,\rho )|_{X \cap T^{\epsilon}_Y} : X \cap T^{\epsilon}_Y \to Y \times \R$ is a submersion 
where $T^{\epsilon}_Y := \{ x \in T_Y \ | \ \rho (x) < \epsilon \}$. 
\end {defn}

Let $y_0 \in Y$, and let $\{ x_i\}$ be an arbitrary sequence of points of $X$ which tends to $y_0$ 
such that the sequence of planes $\{ Ker \ d\rho (x_i ) \cap T_{x_i} X\}$ tends to some plane $\tau$ 
in the Grassmann space of $(\dim X - 1)$-planes. 
We call such a sequence of points of $X$ {\em pre-regular}.
Taking a subsequence of $\{ x_i\}$ if necessary,  we may assume that the sequence of planes 
$\{ Ker \ d\rho (x_i )\}$ and $\{ T_{x_i} X\}$ tend to some planes $\mu$ and $\sigma$ 
in the Grassmann spaces of $(\dim M -1)$-planes and $( \dim X)$-planes, respectively.
We say that the pair $(X,Y)$ satisfies {\em condition $(c_d)$ at $y_0$}, if 
$\dim (\mu \cap \sigma) = \dim X - 1$ for any pre-regular sequence 
of points of $X$ tending to $y_0$. 
In addition, we say that the pair $(X,Y)$ satisfies {\em condition $(c_d)$}, 
if $(X,Y)$ satisfies condition $(c_d)$ at any point $y_0 \in Y$.  

\begin{rem}\label{remark1}
Let $M = \R^{n+p}$ and $Y = \R^n \times \{ 0\}$. 
Then $\dim (\mu \cap \sigma ) = \dim X - 1$ if and only if 
$\mu$ and $\sigma$ are transverse at $y_0$ 
if and only if $\sigma \not\subset \mu$ 
if and only if $\mu^{\perp} \not\subset {\sigma}^{\perp}$. 
\end{rem}

We show the following lemma, which gives a sufficient condition for the $(c)$-regularity.

\begin{lem}\label{Lemma}
Suppose that the pair $(X,Y)$ is $(a)$-regular at $y_0 \in Y$ and satisfies condition $(m)$ 
for a control function $\rho$ and condition $(c_d)$ at $y_0 \in Y$.
Then $(X,Y)$ is $(c)$-regular at $y_0 \in Y$ for the function $\rho$.
\end{lem}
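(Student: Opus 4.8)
The plan is to verify the $(c)$-regularity directly from its definition. Take an arbitrary pre-regular sequence $\{x_i\}$ of points of $X$ tending to $y_0$ and, passing to a subsequence, assume $Ker\, d\rho(x_i) \to \mu$, $T_{x_i}X \to \sigma$ and $Ker\, d\rho(x_i)\cap T_{x_i}X \to \tau$ in the respective Grassmannians, where $\dim \tau = \dim X - 1$. The goal is to prove $T_{y_0}Y \subset \tau$. I would organise the argument around the identity $\tau = \mu \cap \sigma$, and then show separately that $T_{y_0}Y$ lies in each of $\sigma$ and $\mu$.

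First I would record the inclusion $\tau \subset \mu \cap \sigma$, which is immediate from taking limits: for each $i$ the plane $Ker\, d\rho(x_i)\cap T_{x_i}X$ is contained in both $Ker\, d\rho(x_i)$ and $T_{x_i}X$, and inclusions pass to Grassmannian limits, so $\tau \subset \mu$ and $\tau \subset \sigma$. Condition $(m)$ guarantees that $\rho|_X$ is a submersion near $y_0$, so each $Ker\, d\rho(x_i)\cap T_{x_i}X$ is genuinely a $(\dim X - 1)$-plane, consistent with $\dim\tau = \dim X - 1$. Now condition $(c_d)$ at $y_0$ forces $\dim(\mu\cap\sigma) = \dim X - 1$ as well, so the inclusion $\tau \subset \mu \cap \sigma$ between equidimensional planes is an equality, $\tau = \mu\cap\sigma$. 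It therefore suffices to establish the two inclusions $T_{y_0}Y \subset \sigma$ and $T_{y_0}Y \subset \mu$.

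The first is exactly the $(a)$-regularity hypothesis applied to the sequence $\{x_i\}$ with $T_{x_i}X \to \sigma$. For the second I would use the tubular structure. Since $\rho|_X$ is a submersion and $\rho^{-1}(0)=Y$, the gradient $\grad\rho(x_i)$ is nonzero; after passing to a further subsequence the unit vectors $\grad\rho(x_i)/|\grad\rho(x_i)|$ converge to a unit vector $v$, and because $Ker\, d\rho(x_i) = (\grad\rho(x_i))^{\perp}$ we obtain $\mu = v^{\perp}$. The hypothesis $\grad\rho(x)\in Ker\, d\pi(x)$ places each of these unit vectors in $Ker\, d\pi(x_i)$; as $x_i \to y_0$ and $d\pi$ has constant rank $\dim Y$, the kernels converge, $Ker\, d\pi(x_i) \to Ker\, d\pi(y_0)$, and for the tubular neighbourhood compatible with the metric $Ker\, d\pi(y_0)$ is the normal space $(T_{y_0}Y)^{\perp}$. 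Hence $v \in (T_{y_0}Y)^{\perp}$, which is precisely $T_{y_0}Y \subset v^{\perp} = \mu$. Combining, $T_{y_0}Y \subset \sigma \cap \mu = \tau$, giving the $(c)$-regularity at $y_0$.

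I expect the main obstacle to be the inclusion $T_{y_0}Y \subset \mu$: it is here that the tubular condition $\grad\rho \in Ker\, d\pi$, the continuity of the kernels of $d\pi$, and the identification $Ker\, d\pi(y_0) = (T_{y_0}Y)^{\perp}$ must be combined carefully, and it is exactly the step that replaces the false orthogonality assertion about $K_x$ in the original proof. The role of the added condition $(c_d)$ is then the clean dimension count that upgrades $\tau \subset \mu\cap\sigma$ to the equality $\tau = \mu\cap\sigma$; this transversality of the limit planes is what the earlier argument was silently assuming and is the ingredient that makes the two inclusions $T_{y_0}Y\subset\sigma$ and $T_{y_0}Y\subset\mu$ suffice.
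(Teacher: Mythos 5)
Your proof is correct in substance and its skeleton is the same as the paper's: both arguments establish $T_{y_0}Y\subset\sigma$ from $(a)$-regularity, $T_{y_0}Y\subset\mu$ from the control condition $\grad \rho\in Ker\, d\pi$, and $\tau\subset\mu\cap\sigma$ by passing inclusions to the limit, and both use condition $(c_d)$ in exactly the same way, namely as the dimension count $\dim\tau=\dim X-1=\dim(\mu\cap\sigma)$ that upgrades the inclusion to the equality $\tau=\mu\cap\sigma\supset T_{y_0}Y$. The one place you genuinely diverge is the inclusion $T_{y_0}Y\subset\mu$: the paper obtains it by first passing to a chart in which $\pi$ becomes the linear projection and the gradient of $\rho$ lies in $\{0\}\times\R^p$, so that $\R^n\times\{0\}\subset Ker\, d\rho(x_i)$ for \emph{every} $i$ before any limit is taken, whereas you argue invariantly, letting $v$ be a limit of unit gradients and using $\grad\rho(x_i)\in Ker\, d\pi(x_i)\to Ker\, d\pi(y_0)$. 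That route works, but as written it has one soft spot: you justify $Ker\, d\pi(y_0)=(T_{y_0}Y)^{\perp}$ by declaring the tubular neighbourhood ``compatible with the metric,'' which is not among the paper's hypotheses (a smooth tubular neighbourhood's fibres need not meet $Y$ orthogonally). Fortunately this orthogonality is forced by the hypotheses you do have: $\grad\rho\in Ker\, d\pi$ means $d\rho$ annihilates the horizontal distribution $(Ker\, d\pi)^{\perp}$, so $\rho$ is constant along integral curves of horizontal vector fields; such a curve through $y_0$ stays in $\rho^{-1}(0)=Y$, hence its velocity $w\in(Ker\, d\pi(y_0))^{\perp}$ is tangent to $Y$, giving $(Ker\, d\pi(y_0))^{\perp}\subset T_{y_0}Y$ and then equality by comparing dimensions. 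Inserting this short observation (or simply invoking the paper's chart normalization) closes the gap; everything else in your argument is sound.
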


\begin{proof}
We first recall the setting of the proof of Theorem 2.4 in \cite{bekkakoike}. 
There is a chart $\Phi : (U, U \cap Y, y_0) \to (\R^{n+p}, \R^n \times \{ 0 \} , 0)$  for $Y$
at $y_0$ such that 

\vspace{3mm}

(1) $\Phi \circ \pi \circ \Phi^{-1}$ is the projection $(y,t) \mapsto (y,0)$ from
$\R^{n+p} \to \R^n \times \{ 0\}$. 

(2) $\grad (\rho \circ \Phi^{-1} )(y,t) \in \{ 0\} \times \R^p$, i.e. is orthogonal 
to $\R^n \times \{ 0 \}$. 

\vspace{3mm}

\noindent It follows that it is enough to prove the lemma in the case where 
$M = T^{\epsilon}_Y = \R^{n+p}$ and $Y = \R^n \times \{ 0\}$, 
$X$ is Whitney $(a)$-regular over $\R^n \times \{ 0\}$ at $0 \in \R^{n+p}$, 
$\pi : \R^n \times \R^p \to \R^n$ defined by $\pi (y,t) = t$ and 
$\rho : \R^{n+p} \to \R$ satisfy 

\vspace{3mm}

\begin{itemize}
\item $\rho^{-1}(0) = Y$,

\item $(\pi ,\rho )|_X : X \to \R^n \times \R$ is a submersion, 
 
\item $\grad \rho (y,t) \in \{ 0\} \times \R^p$ 
if $(y,t) \in \R^{n+p} \setminus \R^n \times \{ 0\}$, 
\end{itemize}

\vspace{3mm}

\noindent and $(X,Y)$ satisfies condition $(c_d)$ at $0 \in \R^{n+p}$. 

Let us remark that condition $(m)$ guarantees 
$$
\dim (Ker \ d\rho (x) \cap T_x(X)) = \dim X - 1
$$ 
at any point $x \in X$. 

Let $\{ x_i\}$ be a sequence of points of $X$ which tends to $0 \in \R^{n+p}$ 
such that the sequence of planes $\{ Ker \ d\rho (x_i ) \cap T_{x_i} X\}$ tends to some plane $\tau$ 
in the Grassmann space of $(\dim X - 1)$-planes. 
Taking a subsequence of $\{ x_i\}$ if necessary,  we may assume that the sequences of planes 
$\{ Ker \ d\rho (x_i )\}$ and $\{ T_{x_i} X\}$ tend to some planes $\mu$ and $\sigma$ 
in the Grassmann spaces of $(n + p -1)$-planes and $( \dim X)$-planes, respectively.
Note that $\R^n \times \{ 0\} \subset Ker \ d\rho (x_i)$ for any $i \in \N$.
Therefore we have
$$
\mu = \lim_{i \rightarrow \infty} Ker \ d\rho (x_i) \supset \R^n \times \{ 0\}.
$$
By the Whitney ($a$)-regularity, we have
$$
\sigma = \lim_{i \rightarrow \infty} T_{x_i}X \supset \R^n \times \{ 0\}.
$$
Therefore we have $\mu \cap \sigma \supset \R^n \times \{ 0\} .$

On the other hand, $\tau =  \lim_{i \rightarrow \infty} (Ker \ d\rho (x_i) \cap T_{x_i}X)$. 
It follows that $\tau \subseteq \mu \cap \sigma$. 
Since $(X,Y)$ satisfies condition $(c_d)$ at $0 \in \R^{n+p}$, 
we have $\dim (\mu \cap \sigma ) = \dim X - 1$. 
Therefore we have $\tau = \mu \cap \sigma \supset \R^n \times \{ 0\}$. 
Thus $(X,Y)$ is $(c)$-regular at $0 \in Y$ for the function $\rho$.
\end{proof}

%%%%%%%%%%%%%%%%%%%%%%%%%%%%%%%%%%%%%%%%%%%%%%%%%%%%%%%%%%%%%%%%%%%%%%%%%%%%%%%
%%%%%%%%%%%%%%%%%%%%%%%%%%%%%%%%%%%%%%%%%%%%%%%%%%%%%%%%%%%%%%%%%%%%%%%%%%%%%%%
\bigskip

\section{Proofs of Theorems 2.7, 2.8 in \cite{bekkakoike}}\label{proof}

Let $\mathcal{E}_{[s]} (n,p)$, $n \ge p$,  denote the set of $C^s$ map-germs $: (\R^n,0) \to (\R^p,0)$, 
and let $j^r f(0)$ denote the $r$-jet of $f$ at $0 \in \R^n$ for $f \in \mathcal{E}_{[s]}(n,p)$, 
$s \ge r$.
For $f \in \mathcal{E}_{[r]}(n,p)$, $\mathcal{H}_r(f;\overline{w})$ denotes the {\em  
horn-neighbourhood of $f^{-1}(0)$ of degree $r$ and width} $\overline{w}$,
$$
\mathcal{H}_r(f;\overline{w}) := \{ x\in \R^n : \ |f(x)| \le \overline{w} |x|^r \} .
$$
Let $v_1, \cdots , v_p$ be $p$ vectors in $\R^n$ where $n \ge p$.
The {\em Kuo distance $\kappa$} (\cite{kuo}) is defined by
$\dis
\kappa(v_1, \ldots,v_p) = \displaystyle \min_{i}\{\text{distance of }\, 
v_i\, \text{ to }\, V_i\},
$
where $V_i$ is the span of the $ v_j$'s, $j\ne i$.
In the case where $p = 1$, $\kappa (v) = \| v \| .$

\begin{defn}\label{Kuo condition} 
A map-germ $f \in \mathcal{E}_{[r]}(n,p)$ satisfies the {\em Kuo condition}, 
if there are positive numbers $C$, $\alpha$, $\overline{w} > 0$ such that 
$$
\kappa (\grad f_1 (x), \cdots , \grad f_p (x)) \ge C |x|^{r-1}
$$
in $\mathcal{H}_r (f;\overline{w}) \cap \{ |x| < \alpha \}$. 
\end{defn}

\begin{defn}\label{second Kuo condition} 
A map-germ $f \in \mathcal{E}_{[r+1]}(n,p)$ satisfies the {\em second Kuo condition}, 
if for any map $g \in \mathcal{E}_{[r+1]}(n,p)$ with $j^{r+1}g(0) = j^{r+1}f(0)$,
there are positive numbers $C$, $\alpha$, $\overline{w}, \delta > 0$ 
(depending on $g$) such that 
$$
\kappa (\grad f_1 (x), \cdots , \grad f_p (x)) \ge C |x|^{r-\delta}
$$
in $\mathcal{H}_{r+1} (g;\overline{w}) \cap \{ |x| < \alpha \}$. 
\end{defn}

Let us recall Theorems 2.7 and 2.8 in \cite{bekkakoike}. 
Let $f : (\R^n,0) \to (\R^p,0)$, $n \ge p$, be a $C^r$ (resp. $C^{r+1}$) map, 
and let $J$ be a bounded open interval containing $[0, 1]$. 
For arbitrary $g \in \mathcal{E}_{[r]}(n,p)$ (resp. $\mathcal{E}_{[r+1]} (n,p)$) 
with $j^r g(0) = j^r f(0)$, define a $C^r$ (resp. $C^{r+1}$) map 
$F : (\R^n \times J, \{ 0\} \times J) \to (\R^p,0)$ by 
$F(x,t) := f(x) + t(g(x) - f(x))$. 
Let us remark that the Kuo condition (resp. the second Kuo condition) guarantees 
that $F^{-1}(0) \setminus \{ 0\} \times J$ is smooth around $\{ 0\} \times J$ 
if it is not empty.
Therefore, if $F^{-1}(0) \ne \{ 0\} \times J$ as set-germs at $\{ 0\} \times J$,
$$
\Sigma (\R^n \times J) := \{ \R^n \times J \setminus F^{-1}(0), 
F^{-1}(0) \setminus \{ 0\} \times J, \{ 0\} \times J \}
$$ 
gives a stratification of $\R^n \times J$ around $\{ 0\} \times J$ 
under the assumption of the Kuo condition (resp. the second Kuo condition).  
In this case, $\dim (F^{-1}(0) \setminus \{ 0\} \times J) = n +1 - p$. 

If $F^{-1}(0) = \{ 0\} \times J$ as set-germs at $\{ 0\} \times J$,
$$
\Sigma (\R^n \times J) := \{ \R^n \times J \setminus \{ 0\} \times J, \{ 0\} \times J \}
$$ 
gives a stratification of $\R^n \times J$ around $\{ 0\} \times J$.

\begin{thm}\label{thm2.7} (\cite{bekkakoike}, Theorem 2.7)
If a $C^r$ map $f \in \mathcal{E}_r (n,p)$ satisfies the Kuo condition, then the stratification
$\Sigma (\R^n \times J)$ is $(c)$-regular.
\end{thm}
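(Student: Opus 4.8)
The plan is to obtain the theorem from Lemma~\ref{Lemma}, applied at each point $y_0=(0,t_0)$ of the lowest stratum $Y:=\{0\}\times J$. I would first record the control data: by the chart reduction already carried out in the proof of Lemma~\ref{Lemma} I may take $\pi(x,t)=(0,t)$ to be the orthogonal projection onto $Y$ and $\rho(x,t):=|x|^2$ as control function, so that $\grad\rho=(2x,0)\in\ker d\pi$ and $\rho^{-1}(0)=Y$. If $F^{-1}(0)=\{0\}\times J$ as set-germs, the only incident pair is the open stratum over $Y$, for which the limiting tangent space is all of $\R^{n+1}$ and both $(a)$-regularity and condition $(c_d)$ are immediate; so I assume $F^{-1}(0)\neq\{0\}\times J$ and treat the pair $(X,Y)$ with $X:=F^{-1}(0)\setminus(\{0\}\times J)$. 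The remaining pairs $(\R^n\times J\setminus F^{-1}(0),\,Y)$ and $(\R^n\times J\setminus F^{-1}(0),\,X)$ are handled respectively by the same triviality (open stratum over $Y$) and by the smoothness of $X$, so all the work concentrates on $(X,Y)$.

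For this pair I would first recover the two \emph{original} hypotheses of the lemma, condition $(m)$ and Whitney $(a)$-regularity, from the Kuo condition exactly as in \cite{bekkakoike}. Writing $F_j=f_j+t(g_j-f_j)$ and using $j^rg(0)=j^rf(0)$ one has $\grad_x F_j(x,t)=\grad_x f_j(x)+\smallo(|x|^{r-1})$ and $\partial_t F_j(x,t)=g_j(x)-f_j(x)\to 0$; the bound $\kappa(\grad f_1,\dots,\grad f_p)\ge C|x|^{r-1}$ on the horn-neighbourhood then keeps $dF_1,\dots,dF_p,\,dt,\,d\rho$ independent at each point of $X$ near $y_0$, which is condition $(m)$, and it forces the limiting normal space of $X$ along any sequence $x_i\to y_0$ (after normalising the gradients by $|x_i|^{r-1}$) to be $W\times\{0\}$ with $W:=\lim\,\mathrm{span}\{\grad_x f_j(\xi_i)\}\subset\R^n$, so the limiting tangent space $\sigma$ contains $\{0\}\times\R=T_{y_0}Y$, i.e. $(a)$-regularity holds.

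The decisive and most delicate step is condition $(c_d)$, which by Remark~\ref{remark1} amounts, for a pre-regular sequence $x_i=(\xi_i,t_i)\in X$ with $\xi_i\to 0$, to the assertion that the limiting radial direction $(v,0)$, $v:=\lim\xi_i/|\xi_i|$, does not lie in $\sigma^{\perp}=W\times\{0\}$. I would in fact prove the stronger statement that $(v,0)$ is \emph{tangent} to $X$ in the limit, i.e. $v\perp W$. The key identity is that on $F^{-1}(0)$ the radial derivatives of the components are negligible: expanding $F_j$ about its leading homogeneous part of degree $r$ and using Euler's relation, the vanishing $F_j(\xi_i,t_i)=0$ forces $\langle\grad_x F_j(\xi_i,t_i),\xi_i\rangle=\smallo(|\xi_i|^r)$. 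Writing the orthogonal projection of $(\xi_i,0)$ onto the gradient span through the Gram matrix of the $\grad F_j$, whose smallest eigenvalue is bounded below by $\mathrm{const}\cdot\kappa^2\ge\mathrm{const}\cdot|\xi_i|^{2r-2}$ thanks to the Kuo distance, these radial derivatives give $\|\mathrm{proj}_{\,\mathrm{span}}(\xi_i,0)\|=\smallo(|\xi_i|)$; dividing by $|\xi_i|$ and letting $i\to\infty$ yields $\mathrm{proj}_W(v)=0$, hence $(v,0)\in\sigma$ and $(v,0)\notin\sigma^{\perp}$, which is condition $(c_d)$. The heart of the correction is exactly this estimate: the Kuo distance lower bound is what promotes the pointwise non-containment of condition $(m)$ to its limiting form, replacing the false orthogonality of $K_x$ to $\R^n$ asserted in \cite{bekkakoike}. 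Having verified $(a)$-regularity, condition $(m)$ and condition $(c_d)$ at every $y_0\in Y$, Lemma~\ref{Lemma} gives $(c)$-regularity of $(X,Y)$, and together with the easy pairs this shows that $\Sigma(\R^n\times J)$ is $(c)$-regular.
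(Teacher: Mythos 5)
Your overall strategy coincides with the paper's: reduce everything to the pair (zero set, axis), import $(a)$-regularity and condition $(m)$ from \cite{bekkakoike}, verify condition $(c_d)$, and invoke Lemma~\ref{Lemma}. The gap is in your proof of condition $(c_d)$, which is precisely the step where all the real work lies. Your key claim --- that $F_j(\xi_i,t_i)=0$ together with ``Euler's relation'' forces $\langle \grad_x F_j(\xi_i,t_i),\xi_i\rangle = o(|\xi_i|^r)$ --- is false unless the $r$-jet $j^r f(0)$ is homogeneous of degree $r$. For the polynomial realization $z_j$ of $j^r f_j(0)$, Euler's relation gives $\langle \grad z_j(x),x\rangle=\sum_d d\, z_j^{(d)}(x)$, where $z_j^{(d)}$ are the homogeneous parts, and the vanishing of $F_j=\sum_d z_j^{(d)}+P_{t,j}$ on the zero set does not control the \emph{differently weighted} sum $\sum_d d\, z_j^{(d)}$. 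Concretely, take $n=2$, $p=1$, $r=2$, $f(x,y)=x-y^2$ and $g=f$: the Kuo condition holds (since $|\grad f|\ge 1\ge C|(x,y)|$), $F^{-1}(0)=\{x=y^2\}\times J$, and on this set $\langle \grad_x F,(x,y)\rangle = x-2y^2=-y^2$, which is of order exactly $|(x,y)|^r$, not $o(|(x,y)|^r)$.

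Because of this, the next link in your chain also breaks: with only $O(|\xi_i|^r)$ for the radial derivatives, the Gram-matrix bound $\lambda_{\min}\ge \mathrm{const}\cdot\kappa^2\ge \mathrm{const}\cdot|\xi_i|^{2r-2}$ yields merely $\|\mathrm{proj}\,(\xi_i,0)\|=O(|\xi_i|)$, which is compatible with $(v,0)\in\sigma^{\perp}$ and so proves nothing. (In the example above the conclusion $v\perp W$ is still true, but only because $|\grad f|$ is of order $1$, far above the Kuo lower bound; your argument never exploits this interplay between how small the radial derivative is and how large the gradients actually are.) The correct statement --- that on a sufficiently thin horn neighbourhood the projection of $x$ onto the span of the gradients is at most $\epsilon|x|$ --- is exactly Claim I of \cite{bekkakoike} (Assertion~\ref{claimI}), a genuinely nontrivial polynomial estimate going back to Kuo, valid for non-homogeneous jets precisely because it is \emph{not} a pointwise Euler-plus-eigenvalue argument. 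The paper's proof of condition $(c_d)$ consists in propagating that estimate from $V_x$ to $V_{t,x}$ (Assertions~\ref{claimIV} and \ref{claimIII}, Lemma~\ref{claimII}) and then to the full gradient spans $W_{(x,t)}$ in $\R^n\times\R$ (Assertion~\ref{est}, Lemma~\ref{c_d}). To repair your argument you would have to either assume $j^r f(0)$ homogeneous (an unacceptable restriction) or re-import Kuo's estimate, at which point you are reconstructing the paper's proof.
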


\begin{thm}\label{thm2.8} (\cite{bekkakoike}, Theorem 2.8)
Let $f \in \mathcal{E}_{[r+1]}(n,p)$. 
If, for any polynomial map $h$ of degree $r + 1$ such that $j^r h(0) = j^r f(0)$,
there are positive numbers $C$, $\alpha$, $\overline{w}, \delta > 0$ 
(depending on $h$) such that 
$$
\kappa (\grad f_1 (x), \cdots , \grad f_p (x)) \ge C |x|^{r-\delta}
$$
in $\mathcal{H}_{r+1} (h;\overline{w}) \cap \{ |x| < \alpha \}$, 
then the stratification $\Sigma (\R^n \times J)$ is $(c)$-regular.
\end{thm}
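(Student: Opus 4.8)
The plan is to verify the three hypotheses of Lemma~\ref{Lemma} for the pair over the bottom stratum and then to invoke that lemma. In the present situation $Y=\{0\}\times J$ is already linear, $\rho(x,t)=|x|^{2}$ has $\grad\rho(x,t)=(2x,0)$ orthogonal to $Y$, and $\R^{n}\times\{0\}\subset Ker\,d\rho$; so we are in the model used in the proof of Lemma~\ref{Lemma}, with $M=\R^{n+1}$ and $\pi(x,t)=t$. Among the incident pairs of $\Sigma(\R^{n}\times J)$, those involving the open stratum $\R^{n}\times J\setminus F^{-1}(0)$ are immediate, since there the tangent space is all of $\R^{n+1}$ and the relevant plane is $Ker\,d\rho(x_{i})$, which already contains $T_{y_{0}}Y$ because $\grad\rho\perp Y$; and the pair $(\R^{n}\times J\setminus F^{-1}(0),\,F^{-1}(0)\setminus Y)$ is a pair over a smooth stratum, which is standard. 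Thus the only delicate pair is $(X,Y)$ with $X:=F^{-1}(0)\setminus(\{0\}\times J)$, treated below.

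First I would reuse the valid part of the original proof of Theorems~\ref{thm2.7} and~\ref{thm2.8}: under the second Kuo condition the pair $(X,Y)$ is Whitney $(a)$-regular at each $y_{0}=(0,t_{0})\in Y$ and satisfies condition $(m)$, and neither argument relies on the false assertion of Theorem~2.4 of \cite{bekkakoike}. Fix a pre-regular sequence $(x_{i},t_{i})\to y_{0}$ with limits $\mu=\lim Ker\,d\rho(x_{i})$ and $\sigma=\lim T_{x_{i}}X$. By $(a)$-regularity $\sigma\supset T_{y_{0}}Y=\{0\}\times\R$, so $\sigma^{\perp}=\lim (T_{x_{i}}X)^{\perp}\subset\R^{n}\times\{0\}$. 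By Remark~\ref{remark1}, condition $(c_{d})$ at $y_{0}$ is exactly $\mu^{\perp}\not\subset\sigma^{\perp}$, i.e. $(\nu,0)\notin\lim (T_{x_{i}}X)^{\perp}$ with $\nu=\lim x_{i}/|x_{i}|$; equivalently, it is the Thom-type assertion that the component of $(x_{i},0)$ tangent to $X$ is not $o(|x_{i}|)$.

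To extract this from the hypothesis, note that $F(x,t)=(1-t)f(x)+tg(x)$ with $j^{r}g(0)=j^{r}f(0)$ and $F(x_{i},t_{i})=0$ give $f(x_{i})=-t_{i}(g-f)(x_{i})=O(|x_{i}|^{r+1})$. Put $h:=j^{r+1}\big[(1-t_{0})f+t_{0}g\big]$, a polynomial of degree $r+1$ with $j^{r}h(0)=j^{r}f(0)$; the same identity gives $|h(x_{i})|\le |t_{0}-t_{i}|\,O(|x_{i}|^{r+1})+O(|x_{i}|^{r+2})$, so $x_{i}\in\mathcal H_{r+1}(h;\overline w)\cap\{|x|<\alpha\}$ for large $i$, and the hypothesis of Theorem~\ref{thm2.8} yields $\kappa(\grad f_{1}(x_{i}),\dots,\grad f_{p}(x_{i}))\ge C|x_{i}|^{r-\delta}$. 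Since $\grad_{x}F_{j}=\grad f_{j}+t_{i}\grad(g_{j}-f_{j})=\grad f_{j}+O(|x_{i}|^{r})$ is a perturbation small compared with $|x_{i}|^{r-\delta}$, the smallest eigenvalue of the Gram matrix $G=(\langle\grad F_{j},\grad F_{k}\rangle)_{j,k}$ stays $\gtrsim|x_{i}|^{2(r-\delta)}$. Granting the estimate $\langle x_{i},\grad f_{j}(x_{i})\rangle=O(|x_{i}|^{r+1})$ on $X$, the numbers $\beta_{j}:=\langle (2x_{i},0),\grad F_{j}\rangle=2\langle x_{i},\grad_{x}F_{j}\rangle$ are $O(|x_{i}|^{r+1})$, whence the squared length $\beta^{\top}G^{-1}\beta$ of the normal component of $(2x_{i},0)$ is $O(|x_{i}|^{2+2\delta})=o(|x_{i}|^{2})$. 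So $(x_{i},0)$ does not become asymptotically normal to $X$, its tangential component is comparable to $|x_{i}|$, and $(c_{d})$ holds at $y_{0}$. With $(a)$, $(m)$ and $(c_{d})$ in hand, Lemma~\ref{Lemma} gives $(c)$-regularity at $y_{0}$; as $y_{0}$ ranges over $Y$ this proves Theorem~\ref{thm2.8}. Theorem~\ref{thm2.7} follows by the same scheme, with the horn neighbourhood $\mathcal H_{r}$ and the exponent $r-1$.

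The step I expect to be the main obstacle is the estimate $\langle x_{i},\grad f_{j}(x_{i})\rangle=O(|x_{i}|^{r+1})$ for points of $X$, equivalently that the radial derivative of each $f_{j}$ is $O(|x_{i}|^{r})$ there. For homogeneous $f_{j}$ it is immediate from Euler's relation together with $f_{j}|_{X}=O(|x|^{r+1})$, but in general it is precisely where the Kuo condition and the horn-neighbourhood containment must be used to exclude dominant low-order radial behaviour; this is the content of the Thom-type inequality of \cite{bekkakoike}. Once it is available, the Gram-matrix estimate above is routine.
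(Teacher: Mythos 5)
Your proposal follows the paper's architecture: reduce to the pair $(F^{-1}(0)\setminus\{0\}\times J,\ \{0\}\times J)$, import $(a)$-regularity and condition $(m)$ from \cite{bekkakoike}, recast condition $(c_d)$ via Remark \ref{remark1} as the statement that the radial direction does not become normal to the middle stratum, and close with Lemma \ref{Lemma}. Several of your steps are sound: the choice $h=j^{r+1}\bigl[(1-t_0)f+t_0g\bigr]$ together with the verification that $x_i\in\mathcal{H}_{r+1}(h;\overline{w})$ for large $i$, the bound $\lambda_{\min}(G)\ge\kappa^2/p$, and the absorption of the $t$-components of $\grad F_j$. But the estimate you ``grant'', namely $\langle x_i,\grad f_j(x_i)\rangle=O(|x_i|^{r+1})$ on the zero stratum, is not a citable fact: it is false in general, and it is \emph{not} what the Thom-type inequality of \cite{bekkakoike} says. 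Take $n=2$, $p=1$, $f(x_1,x_2)=x_1^2-x_2^3$, $r=3$, $g=f$. For any polynomial $h$ of degree $4$ with $j^3h(0)=f$, every point of $\mathcal{H}_4(h;\overline{w})\cap\{|x|<\alpha\}$ satisfies $x_1^2=x_2^3+O(|x|^4)$ and $|x|\approx|x_2|$, hence $|\grad f(x)|\approx 2|x|^{3/2}\ge C|x|^{3-\delta}$ with $\delta=3/2$; so $f$ satisfies the hypothesis of Theorem \ref{thm2.8} (and also the Kuo condition with $r=3$). Yet on $f^{-1}(0)\setminus\{0\}$, Euler's relation gives $\langle x,\grad f(x)\rangle=2x_1^2-3x_2^3=-x_2^3\sim -|x|^{3}=-|x|^{r}$, which is not $O(|x|^{r+1})$. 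For jets of mixed degrees the radial derivative on the zero set is governed by the low-degree part of the jet, not by $r$; the homogeneous case you mention is precisely the case that does not generalize.

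The gap also cannot be repaired by inserting the true size of the numerator into your Gram-matrix computation, because bounding numerator and denominator separately discards the correlation between them. In the example, the correct numerator is $\beta_j\sim|x_i|^{r}$, so your bound becomes $\beta^{\top}G^{-1}\beta\le|\beta|^2/\lambda_{\min}(G)=O(|x_i|^{2r})/C^2|x_i|^{2(r-\delta)}=O(|x_i|^{2\delta})$, which is $o(|x_i|^2)$ only if $\delta>1$; the hypothesis of Theorem \ref{thm2.8} supplies only \emph{some} $\delta>0$, and for Theorem \ref{thm2.7} (exponent $r-1$, which you claim follows ``by the same scheme'') the computation returns exactly $O(|x_i|^2)$ and proves nothing. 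The theorem is nevertheless true in this example because precisely where the radial derivative has size $|x|^{r}$ the gradient is much longer ($\sim|x|^{3/2}$) than the Kuo lower bound; capturing this correlation is the actual content of Assertion \ref{claimI} (Claim I of \cite{bekkakoike}), a semialgebraic curve-selection statement about the polynomial $z=j^rf(0)$ which bounds the \emph{ratio} $\sum_j|\langle x,\grad z_j(x)\rangle|/|N_j(x)|$ by $\epsilon|x|$ in a horn of $z$, rather than each factor separately. The paper's proof then transfers this ratio bound from $z$ to $f_t$ and from the $x$-gradients to the full gradients $\grad F_j$ by perturbation (Assertions \ref{claimIV}, \ref{claimIII}, \ref{est}, Lemma \ref{claimII} and Lemma \ref{c_d}), and it is only on the perturbation terms $\grad P_{t,j}$ and $\partial F_j/\partial t=g_j-f_j$ --- which genuinely are flat to order $r$ --- that the Kuo-type lower bound is used in your numerator-over-denominator fashion. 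Without Claim I, or an independent proof of it, your verification of condition $(c_d)$ --- the heart of the matter --- is missing.
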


\begin{rem}\label{remark21} 
The condition which $f \in \mathcal{E}_{r+1} (n,p)$ in Theorem \ref{thm2.8} 
satisfies is equivalent to the second Kuo condition.
\end{rem}

\begin{proof}[Proof of Theorem \ref{thm2.7}]
Let us show Theorem \ref{thm2.7}, using Lemma \ref{Lemma}.  
In the case where $F^{-1}(0) = \{ 0\} \times J$ as set-germs at $\{ 0\} \times J$, 
it is obvious that $\Sigma (\R^n \times J)$ is a $(c)$-regular stratification.
Therefore we consider only the case where $F^{-1}(0) \ne \{ 0\} \times J$ 
as set-germs at $\{ 0\} \times J$.
We set $X := \R^n \setminus F^{-1}(0)$, $Y := F^{-1}(0) \setminus \{ 0\} \times J$ 
and $Z := \{ 0\} \times J$. 
Then we can easily see that the pairs $(X,Y)$ and $(X,Z)$ are $(c)$-regular (around $Z$).
In order to show that the pair $(Y,Z)$ is $(c)$-regular, we have to check 
the $(a)$-regularity, condition $(m)$ and condition $(c_d)$. 
Here the control function is a non-negative function $\hat{\rho} : \R^n \times \R \to \R$ 
defined by $\hat{\rho}(x,t) := x_1^2 + \cdots + x_n^2$. 
We can show the $(a)$-regularity and condition $(m)$ similarly  
to the proof of Theorem 2.7 in \cite{bekkakoike}.  
Therefore it remains to show that the pair $(Y,Z)$ satisfies condition $(c_d)$. 

For $t \in J$, define a $C^r$ map $f_t : (\R^n ,0) \to (\R^p ,0)$ by $f_t(x) := F(x,t)$, 
namely $f_t(x) := f(x) + t(g(x) - f(x))$. 

The $r$-jet of $f$ at $0 \in \R^n$, $j^r f(0)$, has a unique polynomial representative $z$  
of degree not exceeding $r$.
We do not distinguish the $r$-jet $j^r f(0)$ and the polynomial representative $z$ here. 
We set $q(x) := f(x) - z(x)$ and $r(x) := g(x) - z(x)$, and define 
$P_t (x) := q(x) + t(r(x) - q(x))$, $t \in J$. 
Then $f_t (x) = z(x) + P_t(x)$ where $P_t : (\R^n,0) \to (\R^p,0)$ be a $C^r$ map 
such that $j^r P_t(0) = 0$ for $t \in J$. 

\begin{rem}\label{remark3}
(1) For $a _1> 0$, there are positive numbers $b_1$, $\beta_1 > 0$ with $0 < b_1 \le a_1$ 
such that 
$$
\mathcal{H}_r (f;b_1) \cap \{ |x| < \beta_1 \} \subset \mathcal{H}_r (f_t;a_1) \cap \{ |x| < \beta_1 \}
$$
for any $t \in J$.

(2)  For $a _2> 0$, there are positive numbers $b_2$, $\beta_2 > 0$ with $0 < b_2 \le a_2$ 
such that 
$$
\mathcal{H}_r (f;;b_2) \cap \{ |x| < \beta_2 \} \subset \mathcal{H}_r (z;a_2) \cap \{ |x| < \beta_2 \} . 
$$
\end{rem}

We denote by $V_x$ (resp. $V_{t,x}$) the $p$-dimensional subspace of $\R^n$ spanned by 
$$
\{ \grad z_1 (x), \cdots , \grad z_p (x)\} \ \ (\text{resp.} \ \{ \grad f_{t,1}(x), \cdots , \grad f_{t,p}(x)\} )
$$
for $x \in \mathcal{H}_r(f;\overline{w}) \cap \{ |x| < \alpha \}$ and $t \in J$. 
Concerning $V_x$, we proved the following property in \cite{bekkakoike}.

\begin{ass}\label{claimI} (\cite{bekkakoike}, Claim I) 
Let $\epsilon_1$ be an arbitrary positive number. 
Then there are positive numbers $\alpha_1$, $\overline{w}_1$ with 
$0 < \alpha_1 \le \alpha$ and $0 < \overline{w}_1 \le \overline{w}$ such that 
$$
d(x,V_x) \ge (1 - \epsilon_1)|x| \ \ in \ \ 
\mathcal{H}_r(z;\overline{w}_1) \cap \{ |x| < \alpha_1\} . 
$$
\end{ass}

We first recall the proof of Claim III in \cite{bekkakoike} 
(since the details are not mentioned in the paper). 
Let us denote by $v(x)$ and $v_t(x)$ the projections of $x$ on $V_x$ and $V_{t,x}$, 
respectively. 
For $x \in \mathcal{H}_r(f;\overline{w} ) \cap \{ |x| < \alpha \}$ (and $t \in J$),  
we consider $\{ N_1 (x), \cdots , N_p(x)\}$ the basis of $V_x$ constructed 
as follows:
$$
N_j (x) := \grad z_j(x) - \tilde{N}_j(x) \ \ (1 \le j \le p),
$$
where $\tilde{N}_j(x)$ is the projection of $\grad z_j(x)$ to the subspace 
$V_{x,j}$ spanned by the $\grad z_k(x)$, $k \ne j$, and 
$\{ N_{t,1} (x), \cdots , N_{t,p} (x)\}$ the corresponding basis of $V_{t,x}$. 
Then we have the following assertion.

\begin{ass}\label{claimIV} (\cite{bekkakoike}, Claim IV) 
For any $\epsilon_2 > 0$, there are positive numbers 
$\alpha_2$, $\overline{w}_2$ with $0 < \alpha_2 \le \alpha$ and 
$0 < \overline{w}_2 \le \overline{w}$ such that the following 
inequality holds:
$$
(1 + \epsilon_2 )|N_j (x)| \ge |N_{t,j} (x)| \ge (1 - \epsilon_2)|N_j(x)| \ \ 
(1 \le j \le p)
$$
for $x \in \mathcal{H}_r(f;\overline{w}_2 ) \cap \{ |x| < \alpha_2 \}$ and $t \in J$.
\end{ass}

\noindent By Lemma 3.2 in T.-C. Kuo \cite{kuo}, we have 
$$
v(x) = \sum_{j=1}^p <x, \grad z_j (x)> \frac{N_j(x)}{|N_j(x)|^2}, \ \ 
v_t(x) = \sum_{j=1}^p <x, \grad f_{t,j} (x)> \frac{N_{t,j}(x)}{|N_{t,j}(x)|^2} \ \ 
$$
for $x \in \mathcal{H}_r(f;\overline{w} ) \cap \{ |x| < \alpha \}$ and $t \in J$.
From the proof of Claim I in \cite{bekkakoike}, we can assume that for any $\epsilon_3 > 0$, 
there are positive numbers $\alpha_3$, $\overline{w}_3$ with $0 < \alpha_3 \le \alpha$ 
and $0 < \overline{w}_3 \le \overline{w}$ such that 
\begin{equation}\label{|v|}
|v(x)| \le \sum_{j=1}^p \frac{|<x, \grad z_j (x)>|}{|N_j (x)|} \le \epsilon_3 |x| \ \ 
\text{in} \ \ \mathcal{H}_r(z;\overline{w}_3 ) \cap \{ |x| < \alpha_3\} . 
\end{equation}
Since $f_t(x) = z(x) + P_t(x)$, $t \in J$, there are positive numbers 
$\alpha_4$, $\overline{w}_4$ with
$0 < \alpha_4 \le \min \{ \alpha_2 , \alpha_3\}$ and 
$0 <\overline{w}_4 \le \min  \{ \overline{w}_2 , \overline{w}_3\}$
such that for $x \in \mathcal{H}_r(f;\overline{w}_4 ) \cap \{ |x| < \alpha_4\}$ and $t \in J$, 
the following inequalities hold:
\begin{eqnarray*}
& |v_t(x)| & \le \sum_{j=1}^p \frac{|<x, \grad f_{t,j} (x)>|}{|N_{t,j} (x)|} \\
& & \le \sum_{j=1}^p \frac{|<x, \grad z_j (x)>|}{|N_{t,j} (x)|} + 
\sum_{j=1}^p \frac{|<x, \grad P_{t,j} (x)>|}{|N_{t,j} (x)|} \\
& & \le \frac{\epsilon_3}{1 - \epsilon_2} |x| + \sum_{j=1}^p \frac{|<x, \grad P_{t,j} (x)>|}{|N_{t,j} (x)|}.
\end{eqnarray*}
Note that 
$$
j^{r-1}(\frac{\partial P_{t,j}}{\partial x_i}) (0) = 0 \ \ (1 \le i \le n, \ 1 \le j \le p) \ 
\text{for} \ t \in J.
$$
Therefore, for any $\epsilon_4 > 0$, there are positive numbers $\alpha_5$, 
$\overline{w}_5$ with $0 < \alpha_5 \le \alpha_4$ and $0 < \overline{w}_5 \le \overline{w}_4$  
such that
\begin{equation}\label{|v_t|}
|v_t (x)| \le \epsilon_4 |x| \ \ \text{for} \ x \in \mathcal{H}_r(f;\overline{w}_5 ) \cap \{ |x| < \alpha_5\}
\ \text{and} \ t \in J
\end{equation}
under the assumption of the Kuo condition. 
Then, by (\ref{|v|}) and (\ref{|v_t|}), we have the following assertion. 

\begin{ass}\label{claimIII} (\cite{bekkakoike}, Claim III) 
For any $\epsilon_5 > 0$, there are positive numbers $\alpha_6$, $\overline{w}_6$ 
with $0 < \alpha_6 \le \alpha_5$ and $0 < \overline{w}_6 \le \overline{w}_5$ 
such that
$$
|d(x,V_{t,x}) - d(x,V_x)| \le |v_t (x) - v(x)| \le |v_t(x)| + |v(x)| \le \epsilon_5 |x|
$$
for $x \in \mathcal{H}_r(f;\overline{w}_6 ) \cap \{ |x| < \alpha_6\}$ and $t \in J$. 
\end{ass}

The most important result of \S 3 in \cite{bekkakoike} follows from 
Assertions \ref{claimI} and \ref{claimIII}.

\begin{lem}\label{claimII} (\cite{bekkakoike}, Claim II)
There are positive numbers $\alpha_0$, $\overline{w}_0$ 
with $0 < \alpha_0 \le \min \{ \alpha_1 , \alpha_6\}$ and 
$0 < \overline{w}_0 \le \min \{ \overline{w}_1 , \overline{w}_6\}$ such that
$$
d(x,V_{t,x}) \ge \frac{1}{2} |x| \ \ 
\text{for} \ x \in \mathcal{H}_r(f;\overline{w}_0 ) \cap \{ |x| < \alpha_0 \} \ \text{and} \ t \in J. 
$$ 
\end{lem}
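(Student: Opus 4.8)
The plan is to derive the bound by combining Assertions \ref{claimI} and \ref{claimIII} through the reverse triangle inequality. Assertion \ref{claimI} controls $d(x,V_x)$ from below, and Assertion \ref{claimIII} controls the discrepancy $|d(x,V_{t,x}) - d(x,V_x)|$ uniformly in $t$; the gap between the constant $1-\epsilon_1$ coming from the former and the error $\epsilon_5$ coming from the latter can be arranged to leave at least $\frac{1}{2}$, which is exactly the target.

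Concretely, I would first apply Assertion \ref{claimI} with $\epsilon_1 = \frac{1}{4}$, producing $\alpha_1, \overline{w}_1$ for which $d(x,V_x) \ge \frac{3}{4}|x|$ on $\mathcal{H}_r(z;\overline{w}_1) \cap \{|x| < \alpha_1\}$. Next I would apply Assertion \ref{claimIII} with $\epsilon_5 = \frac{1}{4}$, producing $\alpha_6, \overline{w}_6$ for which $|d(x,V_{t,x}) - d(x,V_x)| \le \frac{1}{4}|x|$ on $\mathcal{H}_r(f;\overline{w}_6) \cap \{|x| < \alpha_6\}$ for every $t \in J$. At any point where both estimates are simultaneously available, the reverse triangle inequality gives $d(x,V_{t,x}) \ge d(x,V_x) - |d(x,V_{t,x}) - d(x,V_x)| \ge \frac{3}{4}|x| - \frac{1}{4}|x| = \frac{1}{2}|x|$, uniformly in $t$, which is the asserted inequality.

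The only point requiring care, and hence the main (though modest) obstacle, is that the two ingredients are stated over horn-neighbourhoods with different centres: Assertion \ref{claimI} lives over $\mathcal{H}_r(z;\cdot)$, whereas Assertion \ref{claimIII} and the desired conclusion live over $\mathcal{H}_r(f;\cdot)$. To reconcile them I would invoke Remark \ref{remark3}(2) with $a_2 = \overline{w}_1$, which supplies $b_2 \le \overline{w}_1$ and $\beta_2 > 0$ such that $\mathcal{H}_r(f;b_2) \cap \{|x| < \beta_2\} \subset \mathcal{H}_r(z;\overline{w}_1) \cap \{|x| < \beta_2\}$. Setting $\alpha_0 := \min\{\alpha_1, \alpha_6, \beta_2\}$ and $\overline{w}_0 := \min\{b_2, \overline{w}_6\}$, one has $\alpha_0 \le \min\{\alpha_1,\alpha_6\}$ and, since $b_2 \le \overline{w}_1$, also $\overline{w}_0 \le \min\{\overline{w}_1, \overline{w}_6\}$, so the stated parameter constraints hold. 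For any $x \in \mathcal{H}_r(f;\overline{w}_0) \cap \{|x| < \alpha_0\}$, the inclusion places $x$ in the domain of Assertion \ref{claimI} while $x$ is already in the domain of Assertion \ref{claimIII}, so the combined estimate above applies for all $t \in J$ and yields the claim.
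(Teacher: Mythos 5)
Your proof is correct and follows exactly the route the paper intends: Lemma \ref{claimII} is stated there as a direct consequence of Assertions \ref{claimI} and \ref{claimIII}, combined via the reverse triangle inequality with $\epsilon_1$ and $\epsilon_5$ chosen so that $(1-\epsilon_1)-\epsilon_5 \ge \frac{1}{2}$. Your additional care in reconciling the horn-neighbourhoods $\mathcal{H}_r(z;\cdot)$ and $\mathcal{H}_r(f;\cdot)$ via Remark \ref{remark3}(2) is a detail the paper leaves implicit, and it is handled correctly.
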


We next denote by $W_{(x,t)}$ the $p$-dimensional subspace of $\R^n \times \R$ spanned 
by 
$$
\{ \grad F_1 (x,t), \cdots , \grad F_p (x,t)\} \ \ \text{for} \   
x \in \mathcal{H}_r(f;\overline{w}) \cap \{ |x| < \alpha \} \ \text{and} \ t \in J.
$$
Then we can show the following lemma.

\begin{lem}\label{c_d} There are positive numbers $\alpha_{11}$, $\overline{w}_{11}$ 
such that 
$$
d((x,0),W_{(x,t)}) \ge \frac{1}{4} |(x,0)| \ \  \text{for} \ 
x \in \mathcal{H}_r(f;\overline{w}_{11} ) \cap \{ |x| < \alpha_{11} \} \ \text{and} \ t \in J.
$$
\end{lem}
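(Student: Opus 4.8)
The plan is to deduce the $(n+1)$-dimensional estimate from the $n$-dimensional one already proved in Lemma \ref{claimII}, by pushing everything down through the orthogonal projection onto the $x$-factor. First I would record the generators of $W_{(x,t)}$. Since $F_j(x,t) = f_{t,j}(x)$, one has $\partial F_j/\partial x_i(x,t) = \partial f_{t,j}/\partial x_i(x)$ and $\partial F_j/\partial t(x,t) = g_j(x) - f_j(x)$, so that
$$
\grad F_j(x,t) = (\grad f_{t,j}(x),\, g_j(x) - f_j(x)) \in \R^n \times \R \quad (1 \le j \le p),
$$
where $\grad f_{t,j}(x)$ is the gradient taken in the $x$-variables alone.

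Let $\Pi : \R^n \times \R \to \R^n$ be the orthogonal projection onto the first $n$ coordinates. Then $\Pi(\grad F_j(x,t)) = \grad f_{t,j}(x)$, whence $\Pi(W_{(x,t)}) \subseteq V_{t,x}$ (with equality, since the $\grad f_{t,j}(x)$ are linearly independent by the Kuo condition), while $\Pi((x,0)) = x$. The one substantive step is to use that $\Pi$ is $1$-Lipschitz: if $w^{*}$ denotes the nearest point to $(x,0)$ in $W_{(x,t)}$, then
$$
d((x,0), W_{(x,t)}) = |(x,0) - w^{*}| \ge |x - \Pi(w^{*})| \ge d(x, V_{t,x}),
$$
the last inequality holding because $\Pi(w^{*}) \in V_{t,x}$.

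It now suffices to invoke Lemma \ref{claimII}. Setting $\alpha_{11} := \alpha_0$ and $\overline{w}_{11} := \overline{w}_0$, for $x \in \mathcal{H}_r(f;\overline{w}_{11}) \cap \{|x| < \alpha_{11}\}$ and $t \in J$ we obtain, using $|(x,0)| = |x|$,
$$
d((x,0), W_{(x,t)}) \ge d(x, V_{t,x}) \ge \tfrac{1}{2}|x| = \tfrac{1}{2}|(x,0)| \ge \tfrac{1}{4}|(x,0)|.
$$
I expect no real obstacle here: all the analytic work sits in Lemma \ref{claimII}, and the present statement is a formal consequence of the contraction property of $\Pi$ together with the fact that projecting onto $\R^n$ carries $(x,0)$ to $x$ and the normal space $W_{(x,t)}$ onto $V_{t,x}$. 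The only point meriting a line of care is the verification that $\Pi$ does not collapse $W_{(x,t)}$, i.e. that no nontrivial combination of the $\grad F_j(x,t)$ has vanishing $x$-part, which is immediate from the linear independence of the $\grad f_{t,j}(x)$.
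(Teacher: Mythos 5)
Your proof is correct, and it takes a genuinely different and substantially shorter route than the paper's. The paper never projects down to $\R^n$; instead it introduces the auxiliary ``horizontal'' subspace $U_{(x,t)}$ spanned by the $(\grad f_{t,j}(x),0)$, notes $d((x,0),U_{(x,t)})=d(x,V_{t,x})\ge\tfrac12|x|$, and then compares $d((x,0),U_{(x,t)})$ with $d((x,0),W_{(x,t)})$ by building orthogonalized bases $M_j(x,t)$ of $U_{(x,t)}$ and $L_j(x,t)$ of $W_{(x,t)}$, invoking the Kuo condition to show the perturbation $\grad F_j(x,t)-(\grad f_{t,j}(x),0)=(0,g_j(x)-f_j(x))$ is relatively small (Assertion \ref{est}), and estimating the two orthogonal projections $u(x,t)$, $\omega(x,t)$ via Kuo's Lemma 3.2 to get $|d((x,0),U_{(x,t)})-d((x,0),W_{(x,t)})|\le\epsilon_7|x|$ (Assertion \ref{III}); the constant then degrades from $\tfrac12$ to $\tfrac14$. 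Your argument replaces all of this with the observation that the orthogonal projection $\Pi:\R^n\times\R\to\R^n$ is $1$-Lipschitz, carries $(x,0)$ to $x$, and carries $W_{(x,t)}$ into $V_{t,x}$, so that $d((x,0),W_{(x,t)})\ge d(x,V_{t,x})$ and Lemma \ref{claimII} finishes the proof --- with the stronger constant $\tfrac12$, and with no further use of the Kuo condition inside this lemma (all dependence on it is quarantined in Lemma \ref{claimII}). Two small remarks: first, your closing worry about $\Pi$ collapsing $W_{(x,t)}$ is unnecessary --- the argument only needs the inclusion $\Pi(W_{(x,t)})\subseteq V_{t,x}$ (which is automatic, since the image of a span is the span of the images), because $\Pi(w^*)$ is then one particular point of $V_{t,x}$ and $d(x,V_{t,x})$ is an infimum over all of them; linear independence of the $\grad f_{t,j}(x)$ plays no role. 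Second, what the paper's heavier machinery buys is a two-sided comparison of the distances (Assertion \ref{III} bounds the absolute difference), but only the lower bound is ever used afterwards, so your streamlined argument fully suffices for the application to condition $(c_d)$ and to Theorem \ref{thm2.7}.
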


\begin{proof}
Let us remark that 
$$
\grad F_j (x,t) = (\grad f_{t,j}(x), \frac{\partial F_j}{\partial t}(x,t)), \ \ 1 \le j \le p.
$$
We denote by $U_{(x,t)}$ the $p$-dimensional subspace of $\R^n \times \R$ spanned 
by 
$$
\{ (\grad f_{t,1}(x), 0),  \cdots , (\grad f_{t,p}(x), 0)\} \ \ \text{for} \   
x \in \mathcal{H}_r(f;\overline{w}) \cap \{ |x| < \alpha \}  \ \text{and} \ t \in J.
$$  
By Lemma \ref{claimII}, we have
\begin{equation}\label{1/2}
d((x,0), U_{(x,t)}) = d(x,V_{t,x}) \ge \frac{1}{2} |x| \  
\text{for} \ x \in \mathcal{H}_r(f;\overline{w}_0 ) \cap \{ |x| < \alpha_0 \} \ 
\text{and} \ t \in J. 
\end{equation}

Let $u(x,t)$ and $\omega (x,t)$ be the projections of $(x,0)$ on $U_{(x,t)}$ 
and $W_{(x,t)}$, respectively. 
Then we have
$$
d((x,0),U_{(x,t)}) = |(x,0) - u(x,t)|, \ \ d((x,0),W_{(x,t)}) = |(x,0) - \omega (x,t)|
$$
for  $x \in \mathcal{H}_r(f;\overline{w} ) \cap \{ |x| < \alpha \}$ and $t \in J$. 
Therefore we have 
\begin{equation}\label{ineq}
|d((x,0),U_{(x,t)}) - d((x,0),W_{(x,t)})| \le |u(x,t) - \omega (x,t)|
\le |u(x,t)| + |\omega (x,t)|
\end{equation}
for  $x \in \mathcal{H}_r(f;\overline{w} ) \cap \{ |x| < \alpha \}$ and $t \in J$. 

For $x \in \mathcal{H}_r(f;\overline{w} ) \cap \{ |x| < \alpha \}$ and $t \in J$, 
let us consider $\{ M_1 (x,t), \cdots , M_p(x,t)\}$ the basis of $U_{(x,t)}$ constructed 
as follows:
$$
M_j (x,t) := (\grad f_{t,j}(x),0) - \tilde{M}_j(x,t) \ \ (1 \le j \le p),
$$
where $\tilde{M}_j(x,t)$ is the projection of $(\grad f_{t,j}(x),0)$ to the subspace 
$U_{(x,t),j}$ spanned by the $(\grad f_{t,k}(x),0)$, $k \ne j$, and let 
$\{ L_1 (x,t), \cdots , L_p (x,t)\}$ be the corresponding basis of $W_{(x,t)}$. 
By definition, we have
$$
|\grad F_j (x,t) - (\grad f_{t,j}(x), 0)| = |\frac{\partial F_j}{\partial t}(x,t)| 
= |g_j (x) - f_j(x)|
$$
where  $j^r (g_j - f_j)(0) = 0$ $(1 \le j \le p)$. 
Therefore there are positive numbers $\alpha_7$, $\overline{w}_7$ with 
$0 < \alpha_7 \le \alpha$ and $0 < \overline{w}_7 \le \overline{w}$ 
such that for any $\lambda_j$, 
$$
\frac{|\sum_j \lambda_j (\grad F_j(x,t) - (\grad f_{t,j}(x),0))|} 
{|\sum_j \lambda_j (\grad f_{t,j}(x),0)|} \rightarrow 0 \ \ \
\text{as} \ x \to 0
$$ 
in $x \in \mathcal{H}_r(f;\overline{w}_7) \cap \{|x| < \alpha_7\}$ 
(uniformly for $t \in J$) under the assumption of the Kuo condition.
Then, using a similar argument to the proof of Claim IV in \cite{bekkakoike}
(see Assertion \ref{claimIV} above), 
we can show the following assertion.

\begin{ass}\label{est}
For any $\epsilon_6 > 0$, there are positive numbers 
$\alpha_8$, $\overline{w}_8$ with $0 < \alpha_8 \le \alpha_7$ and 
$0 < \overline{w}_8 \le \overline{w}_7$ such that the following 
inequality holds:
$$
(1 + \epsilon_6 )|M_j (x,t)| \ge |L_j (x,t)| \ge (1 - \epsilon_6)|M_j(x,t)| \ \ 
(1 \le j \le p)
$$
for $x \in \mathcal{H}_r(f;\overline{w}_8 ) \cap \{ |x| < \alpha_8 \}$ and $t \in J$.
\end{ass}

For $x \in \mathcal{H}_r(f;\overline{w} ) \cap \{ |x| < \alpha \}$ and $t \in J$, 
$$
u(x,t) = \sum_{j=1}^p <(x,0), (\grad f_{t,j} (x),0)> \frac{M_j (x,t)}{|M_j (x,t)|^2},
$$
$$
\omega (x,t) = \sum_{j=1}^p <(x,0), \grad F_j (x,t)> \frac{L_j (x,t)}{|L_j (x,t)|^2}. 
$$
By construction, $<(x,0), (\grad f_{t,j}(x),0)> = <x, \grad f_{t,j}(x)>$ and 
$|M_j (x,t)| = |N_{t,j}(x)|$, $j = 1, \cdots , p$, for $t \in J$. 
It follows from (\ref{|v_t|}) that 
\begin{equation}\label{|u(x,t)|}
|u(x,t)| \le  \sum_{j=1}^p \frac{|<(x,0), (\grad f_{t,j}(x), 0)>|}{|M_j (x,t)|} 
\le \epsilon_4 |x|
\end{equation}
for $x \in \mathcal{H}_r(f;\overline{w}_5 ) \cap \{ |x| < \alpha_5\}$ and $t \in J$.
On the other hand,
$$
|\omega (x,t)| \le \sum_{j=1}^p \frac{|<(x,0), \grad F(x,t)>|}{|L_j (x,t)|} 
= \sum_{j=1}^p \frac{|<(x,0), (\grad f_{t,j}(x), 0)>|}{|L_j (x,t)|} 
$$
for $x \in \mathcal{H}_r(f;\overline{w} ) \cap \{ |x| < \alpha \}$ and $t \in J$. 
By Assertion \ref{est}, we have 
\begin{equation}\label{|w(x,t)|}
|\omega (x,t)| \le \frac{\epsilon_4}{1 - \epsilon_6} |x| \ \ 
\text{for} \ x \in \mathcal{H}_r(f;\overline{w}_9 ) \cap \{ |x| < \alpha_9\} \ \text{and} \ t \in J, 
\end{equation}
where $\alpha_9 = \min \{ \alpha_5 , \alpha_8 \}$ and 
$\overline{w}_9 = \min \{ \overline{w}_5, \overline{w}_8 \}$. 

By (\ref{ineq}), (\ref{|u(x,t)|}) and (\ref{|w(x,t)|}), we have the following assertion.

\begin{ass}\label{III}
For any $\epsilon_7 > 0$, there are positive numbers 
$\alpha_{10}$, $\overline{w}_{10}$ with $0 < \alpha_{10} \le \alpha_9$ and 
$0 < \overline{w}_{10} \le \overline{w}_9$ such that 
$$
|d((x,0),U_{(x,t)}) - d((x,0).W_{(x,t)})| \le \epsilon_7 |x| \ \ 
\text{for} \ x \in \mathcal{H}_r(f;\overline{w}_{10}) \cap \{ |x| < \alpha_{10}\} \ \text{and} \ t \in J.
$$
\end{ass} 

By (\ref{1/2}) and Assertion \ref{III}, there are positive numbers $\alpha_{11}$, $\overline{w}_{11}$ 
{\em with} $0 < \alpha_{11} \le \alpha_{10}$ and $0 < \overline{w}_{11} \le \overline{w}_{10}$  
such that 
$$
d((x,0),W_{(x,t)}) \ge \frac{1}{4} |(x,0)| \ \  \text{for} \ 
x \in \mathcal{H}_r(f;\overline{w}_{11} ) \cap \{ |x| < \alpha_{11} \} \ \text{and} \ t \in J.
$$

\end{proof}

By Lemma \ref{c_d}, we have
$$
d(\frac{(x,0)}{|(x,0)|},W_{(x,t)}) \ge \frac{1}{4} \ \ \text{for} \ x \in 
\mathcal{H}_r(f;\overline{w}_{11} ) \cap \{ 0 < |x| < \alpha_{11} \} \ \text{and} \ t \in J.
$$
Let $\ell_{(t,x)}$ be the the $1$-dimensional subspace of $\R^n \times \R$ 
spanned by $\grad \hat{\rho} (x,t)$ for $x \ne 0$ and $t \in \R$. 
Here $\grad \hat{\rho} (x,t) = (2x_1, \cdots , 2x_n , 0)$.  
Therefore we have 
\begin{equation}\label{keyestimation}
\overline{d} (\ell_{(x,t)},W_{(x,t)}) \ge \frac{1}{4} \ \ \text{for} \ x \in 
\mathcal{H}_r(f;\overline{w}_{11} ) \cap \{ 0 < |x| < \alpha_{11} \} \ \text{and} \ t \in J.
\end{equation}
Here 
$$
\overline{d}(\ell ,W) := \max_{||v|| = 1} \{ d(v,W) \ | \ v \in \ell \}
$$
for subspaces $\ell$ , $W$ of $\R^m$ with $\dim \ell \le \dim W$. 
Note that $\ell \subset W$ if and only if $\overline{d}(\ell , W) = 0$. 

Let $(0,t_0) \in Z = \{  0\} \times J$, and let $\{ (x_i,t_i)\}$ be any pre-regular 
sequence of points of $Y = F^{-1}(0) \setminus \{ 0 \} \times J$ which tends to $(0,t_0)$. 
Namely, the sequence of planes $\{ Ker \grad  \hat{\rho} ((x_i,t_i) ) \cap T_{(x_i,t_i)} Y\}$ 
tends to some plane in the Grassmann space of $(n - p)$-planes. 
Taking a subsequence of $\{ (x_i,t_i)\}$ if necessary,  we may assume that the sequence of planes 
$\{ Ker \grad \hat{\rho} ((x_i,t_i) )\}$ and $\{ T_{(x_i,t_i)} Y\}$ tend to some planes $\mu$ and 
$\sigma$ in the Grassmann spaces of $n$-planes and $(n + 1 - p)$-planes, respectively. 
By (\ref{keyestimation}), we have 
$$
\overline{d} (\ell_{(x_i,t_i)},W_{(x_i,t_i)}) \ge \frac{1}{4} \ \ \text{for} \ (x_i,t_i) \in 
Y \cap \{ 0 < |x| < \alpha_{11} \} .
$$
Since $W_{(x_i,t_i)} = (T_{(x_i,t_i)}Y)^{\perp}$ and $\ell_{(x_i,t_i)} = (Ker \grad \hat{\rho}((x_i,t_i)))^{\perp}$, 
it follows that $\mu^{\perp} \not\subset \sigma^{\perp}$.
By Remark 1, this implies that $(Y,Z)$ is $(c_d)$-regular at $(0,t_0)$.

This completes the proof of Theorem \ref{thm2.7}.
\end{proof}

The proof of Theorem \ref{thm2.8} goes almost in the same way as the above argument.

%%%%%%%%%%%%%%%%%%%%%%%%%%%%%%%%%%%%%%%%%%%%%%%%%%%%%%%%%%%%%%%%%%%%%%%%%%%%%%%
%%%%%%%%%%%%%%%%%%%%%%%%%%%%%%%%%%%%%%%%%%%%%%%%%%%%%%%%%%%%%%%%%%%%%%%%%%%%%%%
\bigskip

\end{document}